\theoremstyle{plain}
\newtheorem{theorem}{Theorem}
\newtheorem{lemma}{Lemma}
\newtheorem{proposition}{Proposition}
\newtheorem{corollary}{Corollary}
\theoremstyle{definition}
\newtheorem{definition}{Definition}
\newtheorem{example}{Example}
\newtheorem{remark}{Remark}
\begin{document}	

\title{Generalized Taylor's formula for power fractional derivatives\thanks{This 
is a preprint of a paper whose final and definite form is published in 
'Bolet\'{\i}n de la Sociedad Matem\'{a}tica Mexicana' at [https://www.springer.com/journal/40590]. 
This work was supported by CIDMA and is funded by the 
Funda\c{c}\~{a}o para a Ci\^{e}ncia e a Tecnologia, I.P. 
(FCT, Funder ID 50110000187) 
under grants UIDB/04106/2020 and UIDP/04106/2020.}}

\author{Hanaa Zitane$^{1}$\\
\url{https://orcid.org/0000-0002-7635-9963}\\	
\texttt{h.zitane@ua.pt}
\and Delfim F. M. Torres$^{1,2,}$\thanks{Corresponding author: \texttt{delfim@ua.pt}}\\
\url{https://orcid.org/0000-0001-8641-2505}\\
\texttt{delfim@ua.pt}; \texttt{delfim@unicv.cv}}

\date{$^{1}$Center for Research and Development in Mathematics and Applications (CIDMA),
Department of Mathematics, University of Aveiro, 3810-193 Aveiro, Portugal\\[0.3cm]
$^{2}$Research Center in Exact Sciences (CICE), 
Faculty of Sciences and Technology (FCT), 
University of Cape Verde (Uni-CV), 
7943-010 Praia, Cape Verde}

\maketitle


\begin{abstract}
We establish a new generalized Taylor's formula for power fractional derivatives 
with nonsingular and nonlocal kernels, which includes many known Taylor's 
formulas in the literature. Moreover, as a consequence, we obtain a general 
version of the classical mean value theorem. We apply our main result to 
approximate functions in Taylor's expansions at a given point. The explicit 
interpolation error is also obtained. The new results are illustrated 
through examples and numerical simulations.

\medskip

\noindent \textbf{Keywords:} Approximation of functions, 
Mean value theorem, Power fractional operators, Taylor's formula.

\medskip

\noindent \textbf{Mathematics Subject Classification}: 26A24, 26A33, 41A58.
\end{abstract}


\section{Introduction}

Taylor's theorem is one of the central elementary tools in mathematical analysis, 
e.g., in numerical methods, topology optimization and optimal control~\cite{Blaszczyk,Heydari,Laib}.
It provides simple arithmetic formulas, in polynomial terms, to accurately compute values of various 
transcendental functions, such as trigonometric and exponential ones. This fundamental theorem 
has various significant applications in Mathematics~\cite{Teso}, engineering~\cite{He}, 
and other fields of applied sciences~\cite{ Rani,Shiraishi}. In consequence, 
several versions of Taylor's theorem, involving  different types of fractional operators, 
have been established. For instance, generalizations of this formula, using fractional operators 
with singular kernels, such as Riemann--Liouville and Caputo  derivatives, are provided in 
\cite{Benjemaa,ElAjou,OdibatTaylor,TrujilloTaylor}. In \cite{FernandezTaylor}, a Taylor's theorem 
is proved for Atangana--Baleanu fractional derivatives in Caputo sense while in \cite{ZineTaylor} 
it is derived for generalized weighted fractional operators.

Recently, a new generalized fractional derivative with nonsingular and nonlocal kernel 
was introduced~\cite{PowerDerivative}. The  power fractional derivative (PFD) is essentially 
characterized by the presence of a key power parameter $p$, which allows one to choose the 
appropriate fractional operator that effectively describes the phenomena under study in a natural way, 
and then creating good mathematical models to represent systems and predict their future dynamical behaviors. 
Furthermore, this fractional operator generalizes and unifies most of fractional derivatives 
with nonsingular kernels, such us the Caputo--Fabrizio~\cite{CapFab}, Atangana--Baleanu~\cite{AtanBal}, 
weighted Atangana--Baleanu~\cite{ALRefai1}, and weighed generalized fractional derivatives~\cite{Hattaf}. 

Motivated by available results, in the present paper we propose to investigate a more 
general and rich version of Taylor's formula involving the recently introduced power 
fractional derivative.

The outline of the paper is as follows. In Section~\ref{Sec2}, we review the necessary 
notions on power fractional calculus. Our main results are given in Section~\ref{Sec3}, 
where we begin by proving important lemmas and tools about power fractional operators, 
and their nth-order operators, that are necessary in the sequel. Furthermore, we establish 
a new generalized Taylor's theorem and a general version of the mean value theorem 
via power fractional differentiation. Then, in Section~\ref{Sec4}, we apply our main 
result to approximate functions in Taylor's series at a given point, where the explicit 
interpolation error of the approximation of the function by its Taylor polynomial 
is also characterized. We end up with Section~\ref{Sec5} of conclusion and future work.


\section{Preliminary definitions}
\label{Sec2}

Let $C([a, b])$ be the Banach space of all continuous real functions defined on $[a, b]$, 
where $a, b \in \mathbb{R}$, and $H^{1}(a,b)$ be the Sobolev space of order one defined by 
$$
H^{1}(a,b)=\{f \in L^{2}(a,b): f'\in L^{2}(a,b)\}.
$$
In what follows, we review some basic concepts and tools 
about power fractional calculus that are used along the text. 

\begin{definition}[See \cite{PowerDerivative}]
\label{PowerMittag} 
The power Mittag--Leffler function is given by
\begin{equation}\label{PMF}
{}^p \! E_{k,l}(\tau)=\displaystyle\sum_{n=0}^{+\infty}\dfrac{(\tau\ln p)^{n}}{\Gamma(kn+l)},
\quad \tau \in\mathbb{C},
\end{equation}
where $\min(k, l)>0$, $p>0$, and $\Gamma(\cdot)$ 
is the Gamma function \cite{mittag}.
\end{definition}

\begin{remark} 
Note that the Mittag--Leffler function of two parameters $k$ and $l$ 
is recovered when $p=e$, while the Mittag--Leffler function of one parameter 
$k$ is obtained when $p=e$ and $l=1$ \cite{mittag}. 
\end{remark}

Throughout the paper, we adopt the notations
$$
\chi(\alpha):=\dfrac{1-\alpha}{N(\alpha)}, 
\quad \varphi(\alpha):=\dfrac{\alpha}{N(\alpha)}
\ \text{ and }\ \mu_{\alpha}:=\dfrac{\alpha}{1-\alpha},
$$ 
where $\alpha \in [0, 1)$ and $N(\alpha)$ is a normalization function 
such as $N(0)=N(1^{-})=1$ with 
$N(1^{-})=\underset{\alpha \rightarrow 1^{-}}{\lim}N(\alpha)$.

\begin{definition}[See \cite{PowerDerivative}]
\label{PowerDerivative}
Let $\alpha \in [0, 1)$, $\min(\beta, p)>0$, and $f\in H^{1}(a,b)$. 
The power fractional derivative of order $\alpha$ in the Caputo sense, 
of a function $f$ with respect to the weight function $\omega$, is defined by
\begin{equation}
\label{PFD}
{}^p{}^C \!D_{a,t,\omega}^{\alpha,\beta,p}f(t)=\dfrac{1}{\chi(\alpha)}\dfrac{1}{\omega(t)}
\int_{a}^{t} {}^p \! E_{\beta,1}\left(-\mu_{\alpha}(t-\tau)^{\beta}\right)(\omega f)'(\tau)\, \mathrm{d}\tau,
\end{equation}
where $\omega \in C^{1}([a,b])$ with $\omega>0$ on $[a,b]$.
\end{definition}

\begin{remark}
The PFD \eqref{PFD} generalizes and includes various cases of fractional 
derivative operators available in the literature, such as:
\begin{itemize}
\item  when $p=e$, $\beta=1$, and $\omega(t)\equiv 1$, 
we obtain the Caputo-Fabrizio fractional derivative \cite{CapFab} defined by 
$$
{}^p{}^C \!D_{a,t,1}^{\alpha,1,e}f(t)=\dfrac{1}{\chi(\alpha)}
\int_{a}^{t} \exp\left(-\mu_{\alpha}(t-\tau )\right) f'(\tau)\, \mathrm{d} \tau;
$$

\item when $p=e$, $\beta=\alpha$, and $\omega(t)\equiv 1$, we retrieve  
the Atangana--Baleanu fractional derivative \cite{AtanBal} given by
$$
{}^p{}^C \!D_{a,t,1}^{\alpha,\alpha,e}f(t)=\dfrac{1}{\chi(\alpha)}
\int_{a}^{t} E_{\alpha}\left(-\mu_{\alpha}(t-\tau )\right) f'(\tau)\, \mathrm{d} \tau;
$$

\item when $p=e$ and $\beta=\alpha$, we obtain the weighted 
Atangana--Baleanu fractional derivative given \cite{ALRefai1} by 
$$
{}^p{}^C \!D_{a,t,\omega}^{\alpha,\alpha,e}f(t)
=\dfrac{1}{\chi(\alpha)}\dfrac{1}{\omega(t)}
\int_{a}^{t} E_{\alpha}\left(-\mu_{\alpha}(t
- \tau )^{\alpha}\right)(\omega f)'(\tau)\, \mathrm{d} \tau;
$$

\item when $p=e$, we get the weighted generalized 
fractional derivative \cite{Hattaf} defined as follows:
$$
{}^p{}^C \!D_{a,t,\omega}^{\alpha,\beta,e}f(t)
=\dfrac{1}{\chi(\alpha)}\dfrac{1}{\omega(t)}
\int_{a}^{t} E_{\beta}\left(-\mu_{\alpha}(t
- \tau )^{\beta}\right)(\omega f)'(\tau)\, \mathrm{d} \tau.
$$
\end{itemize}
\end{remark}

Now we provide the power fractional integral (PFI) associated 
with the power fractional derivative \eqref{PFD}.

\begin{definition}[See \cite{PowerDerivative}]	
\label{PowerDIntegral}	
The power fractional integral of order $\alpha$, of a function $f$ 
with respect to the weight function $\omega$, is given by
\begin{equation}
\label{PFI}
{}^p\!I_{a,t,\omega}^{\alpha,\beta,p}f(t)=
\chi(\alpha)f(t)+\ln p\cdot\varphi(\alpha) {}^R\!{}^L\!I_{a,\omega}^{\beta}f(t),
\end{equation}
where ${}^R\!{}^L\!I_{a,\omega}^{\beta}$ is the standard weighted 
Riemann--Liouville fractional integral of order $\beta$ defined by
$$
{}^R\!{}^L\!I_{a,\omega}^{\beta}f(t)=\dfrac{1}{\Gamma(\beta)}\dfrac{1}{\omega(t)}
\int_{a}^{t}(t-\tau )^{\beta-1}(\omega f)(\tau)\, \mathrm{d} \tau .
$$	
\end{definition}

\begin{remark} 
If we let $p=e$ in \eqref{PFI}, then we retrieve the generalized 
fractional integral operator given in \cite{Hattaf}. Moreover, 
if we let $p=e$, $\beta=\alpha$, and $\omega(t)\equiv 1$ in \eqref{PFI}, 
then we obtain the Atangana--Baleanu fractional integral 
operator introduced in \cite{AtanBal}. 
\end{remark}

For the sake of simplicity, we shall denote
${}^p{}^C \!D_{a,t,\omega}^{\alpha,\beta,p}$ 
and ${}^p\!I_{a,t,\omega}^{\alpha,\beta,p}$ 
by ${}^p\!D_{a,\omega}^{\alpha,\beta}$ and ${}^p\!I_{a,\omega}^{\alpha,\beta}$, respectively.


\section{Taylor's formula via power fractional derivatives}
\label{Sec3}

In this section, we establish a new generalized  Taylor's formula in the framework 
of the power fractional derivative. We first prove some fundamental lemmas and 
results about power fractional operators and their nth-order 
that are needed in the proof of the main theorem.

\begin{lemma}
\label{key1}
The power fractional derivative ${}^p\!D_{a,\omega}^{\alpha,\beta}$ 
can be expressed as follows:
\begin{equation}
\label{new}
{}^p\!D_{a,\omega}^{\alpha,\beta}f(t)=\dfrac{1}{\chi(\alpha)}
\displaystyle\sum_{n=0}^{+\infty}\left(-\mu_{\alpha}\ln p\right)^{n} 
{}^R\!{}^L\!I_{a,\omega}^{\beta n+1}\left(\dfrac{(\omega f)'}{\omega}\right)(t).
\end{equation}
This series converges locally and uniformly in $t$ for any $a$, $\alpha$, $\beta$, $p$, 
$\omega$ and $f$, verifying the conditions laid out in Definition~\ref{PowerDerivative}.
\end{lemma}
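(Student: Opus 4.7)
\smallskip

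The plan is to substitute the series definition of the power Mittag--Leffler function into the integral representation \eqref{PFD} and then interchange summation and integration. Concretely, from Definition~\ref{PowerMittag},
\[
{}^p\!E_{\beta,1}\bigl(-\mu_{\alpha}(t-\tau)^{\beta}\bigr)
=\sum_{n=0}^{+\infty}\frac{(-\mu_{\alpha}\ln p)^{n}(t-\tau)^{\beta n}}{\Gamma(\beta n+1)},
\]
so plugging this into \eqref{PFD} and pulling the (for now, formal) sum outside yields
\[
{}^p\!D_{a,\omega}^{\alpha,\beta}f(t)
=\frac{1}{\chi(\alpha)}\sum_{n=0}^{+\infty}(-\mu_{\alpha}\ln p)^{n}
\cdot\frac{1}{\Gamma(\beta n+1)}\frac{1}{\omega(t)}\int_{a}^{t}(t-\tau)^{\beta n}(\omega f)'(\tau)\,\mathrm{d}\tau.
\]
The integral on the right, after multiplying and dividing by $\omega(\tau)$ inside, is exactly ${}^R\!{}^L\!I_{a,\omega}^{\beta n+1}\!\bigl((\omega f)'/\omega\bigr)(t)$ by Definition~\ref{PowerDIntegral}, which delivers the formula \eqref{new}.

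The real work is to legitimize the interchange of series and integral and to establish the locally uniform convergence in $t$ that is asserted. For this I would fix a compact subinterval $[a,T]\subset[a,b]$, and on this set dominate the partial sums of the integrand by noting
\[
\sum_{n=0}^{+\infty}\frac{|\mu_{\alpha}\ln p|^{n}(t-\tau)^{\beta n}}{\Gamma(\beta n+1)}
\;\le\;\sum_{n=0}^{+\infty}\frac{(|\mu_{\alpha}\ln p|(T-a)^{\beta})^{n}}{\Gamma(\beta n+1)}
\;=:\;M<\infty,
\]
the last sum being a value of the classical one-parameter Mittag--Leffler function, which is an entire function. Hence the integrand of \eqref{PFD} is uniformly bounded by $M\,|(\omega f)'(\tau)|$ on $[a,T]$. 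Since $f\in H^{1}(a,b)$ and $\omega\in C^{1}([a,b])$ with $\omega>0$, the function $(\omega f)'$ lies in $L^{2}(a,b)\subset L^{1}(a,b)$, so this bound is integrable. Dominated convergence then justifies swapping $\sum$ and $\int$.

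For the locally uniform convergence in $t$, I would estimate the tail of \eqref{new}: for $n\ge N$ and $t\in[a,T]$,
\[
\left|(-\mu_{\alpha}\ln p)^{n}\,{}^R\!{}^L\!I_{a,\omega}^{\beta n+1}\!\!\left(\tfrac{(\omega f)'}{\omega}\right)(t)\right|
\le\frac{(|\mu_{\alpha}\ln p|(T-a)^{\beta})^{n}}{\Gamma(\beta n+1)}\cdot\frac{\|(\omega f)'\|_{L^{1}}}{\min_{[a,T]}\omega},
\]
whose sum over $n\ge N$ is a tail of the same Mittag--Leffler series $M$ and hence tends to zero independently of $t\in[a,T]$. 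The only delicate step, and the one I expect to require care rather than a genuine obstacle, is choosing a dominating function that is uniform in both $t$ and $\tau$; the bound above does that in one stroke by using the monotonicity of $(t-\tau)^{\beta n}$ in $t$ on $[a,T]$.
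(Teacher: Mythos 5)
Your proposal follows essentially the same route as the paper: substitute the series \eqref{PMF} into \eqref{PFD}, interchange sum and integral, and recognize each term as a weighted Riemann--Liouville integral. The paper justifies the interchange only by remarking that the power Mittag--Leffler function is entire and hence locally uniformly convergent, so your dominated-convergence and tail estimates simply make explicit the convergence argument the paper leaves implicit; the computation itself is identical.
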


\begin{proof}
The power Mittag--Leffler function ${}^p \! E_{k,l}(s)$ is an entire function of $s$. 
Since it is locally uniformly convergent in the whole complex plane, 
it implies that the PFD may be rewritten as follows:
\begin{equation*}	
\begin{split}
{}^p\!D_{a,\omega}^{\alpha,\beta}f(t)
&=\dfrac{1}{\chi(\alpha)}\dfrac{1}{\omega(t)}\displaystyle\sum_{n=0}^{+\infty}
\dfrac{\left(-\mu_{\alpha}\ln p\right)^{n}}{\Gamma(\beta n+1)}
\int_{a}^{t}(t-\tau )^{\beta n}(\omega f)'( \tau )\, \mathrm{d}\tau \\
&=\dfrac{1}{\chi(\alpha)}\displaystyle\sum_{n=0}^{+\infty}\left(-\mu_{\alpha}\ln p\right)^{n}
\dfrac{1}{\Gamma(\beta n+1)}\dfrac{1}{\omega(t)}\int_{a}^{t}(t-\tau )^{\beta n}(\omega f)'( \tau )\, \mathrm{d}\tau\\
&=\dfrac{1}{\chi(\alpha)}\displaystyle\sum_{n=0}^{+\infty}
\left(-\mu_{\alpha}\ln p\right)^{n} {}^R\!{}^L\!I_{a,\omega}^{\beta n+1}\left(\dfrac{(\omega f)'}{\omega}\right)(t),
\end{split}	
\end{equation*}	
as required.
\end{proof}

\begin{remark} 
The new formula \eqref{new} of the power fractional derivative \eqref{PFD} 
is easier to handle for certain computational purposes.
\end{remark}

The power fractional integral and derivative satisfy the following composition property.

\begin{proposition}
\label{PFDandPFI} 
Let $\alpha \in [0, 1)$, $p,\beta > 0$ and $f \in H^{1}(a,b)$. Then, it holds that
\begin{equation}
{}^p\!I_{a,\omega}^{\alpha,\beta}\left({}^p\!D_{a,\omega}^{\alpha,\beta}f\right)(t)
=f(t)-\dfrac{(\omega f)(a)}{\omega(t)}.
\end{equation}
\end{proposition}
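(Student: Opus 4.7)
The plan is to use the series representation of the power fractional derivative from Lemma~\ref{key1} and exploit the semigroup property of the weighted Riemann--Liouville integral to force a telescoping cancellation. The whole proof hinges on the algebraic identity
$$
\frac{\varphi(\alpha)}{\chi(\alpha)} = \frac{\alpha/N(\alpha)}{(1-\alpha)/N(\alpha)} = \frac{\alpha}{1-\alpha} = \mu_{\alpha},
$$
which is exactly what is needed to line up the two pieces coming from ${}^p\!I_{a,\omega}^{\alpha,\beta} = \chi(\alpha)\,\mathrm{Id} + \ln p\cdot\varphi(\alpha)\,{}^{RL}\!I_{a,\omega}^{\beta}$.

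First I would substitute the series form \eqref{new} of ${}^p\!D_{a,\omega}^{\alpha,\beta}f$ into the definition \eqref{PFI} of ${}^p\!I_{a,\omega}^{\alpha,\beta}$. Since the series converges locally uniformly (by Lemma~\ref{key1}), I may interchange the sum with the Riemann--Liouville integral, and then use the semigroup property of the weighted Riemann--Liouville integral, ${}^{RL}\!I_{a,\omega}^{\beta}\,{}^{RL}\!I_{a,\omega}^{\beta n+1} = {}^{RL}\!I_{a,\omega}^{\beta(n+1)+1}$, to rewrite the second piece. After factoring $1/\chi(\alpha)$ and using $\varphi(\alpha)/\chi(\alpha) = \mu_{\alpha}$, the composition becomes
$$
\sum_{n=0}^{+\infty}(-\mu_{\alpha}\ln p)^{n}\,{}^{RL}\!I_{a,\omega}^{\beta n+1}\!\left(\tfrac{(\omega f)'}{\omega}\right)(t)
-\sum_{n=0}^{+\infty}(-\mu_{\alpha}\ln p)^{n+1}\,{}^{RL}\!I_{a,\omega}^{\beta(n+1)+1}\!\left(\tfrac{(\omega f)'}{\omega}\right)(t).
$$

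Shifting the index in the second sum by $m=n+1$, all terms with $n\ge 1$ cancel and only the $n=0$ term of the first sum survives, namely ${}^{RL}\!I_{a,\omega}^{1}\bigl((\omega f)'/\omega\bigr)(t)$. By the very definition of the weighted Riemann--Liouville integral, this equals
$$
\frac{1}{\omega(t)}\int_{a}^{t}(\omega f)'(\tau)\,\mathrm{d}\tau
= \frac{(\omega f)(t) - (\omega f)(a)}{\omega(t)}
= f(t) - \frac{(\omega f)(a)}{\omega(t)},
$$
which is the claimed identity.

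The main technical obstacle is the justification of the interchange of summation with the Riemann--Liouville integral and with the cancellation above; this rests on the locally uniform convergence asserted in Lemma~\ref{key1} together with the integrability of $(\omega f)'/\omega$ ensured by $f\in H^{1}(a,b)$ and $\omega\in C^{1}([a,b])$ with $\omega>0$. A secondary point, essentially bookkeeping, is the semigroup property of the weighted Riemann--Liouville operator, which follows from the unweighted case by a direct change of variable once one writes ${}^{RL}\!I_{a,\omega}^{\beta}f = \omega^{-1}\,{}^{RL}\!I_{a}^{\beta}(\omega f)$.
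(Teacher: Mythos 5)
Your proposal is correct and follows essentially the same route as the paper: substitute the series representation from Lemma~\ref{key1} into the definition of the power fractional integral, use the semigroup property of the weighted Riemann--Liouville integral together with the identity $\ln p\cdot\varphi(\alpha)/\chi(\alpha)=\mu_{\alpha}\ln p$, and let the two series telescope down to ${}^R\!{}^L\!I_{a,\omega}^{1}\bigl((\omega f)'/\omega\bigr)(t)=f(t)-(\omega f)(a)/\omega(t)$. Your explicit attention to the interchange of sum and integral and to the weighted semigroup property is a welcome addition that the paper leaves implicit.
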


\begin{proof}
According with Definition~\ref{PowerDIntegral}, we have
$$
{}^p\!I_{a,\omega}^{\alpha,\beta}\left({}^p\!D_{a,\omega}^{\alpha,\beta}f\right)(t)
=\chi(\alpha){}^p\!D_{a,\omega}^{\alpha,\beta}f(t)+\ln p
\cdot\varphi(\alpha) {}^R\!{}^L\!I_{a,\omega}^{\beta}\left({}^p\!D_{a,\omega}^{\alpha,\beta}f\right)(t).
$$
Moreover, by virtue of Lemma~\ref{key1}, one obtains that
\begin{equation*}
\begin{split}
{}^p\!I_{a,\omega}^{\alpha,\beta}\left({}^p\!D_{a,\omega}^{\alpha,\beta}f\right)(t)
&=\displaystyle\sum_{n=0}^{+\infty}\left(-\mu_{\alpha}
\ln p\right)^{n} {}^R\!{}^L\!I_{a,\omega}^{\beta n+1}\left(\dfrac{(\omega f)'}{\omega}\right)(t)\\
&\quad+\mu_{\alpha}\ln p {}^R\!{}^L\!I_{a,\omega}^{\beta}\left[
\displaystyle\sum_{n=0}^{+\infty}\left(-\mu_{\alpha}
\ln p\right)^{n} {}^R\!{}^L\!I_{a,\omega}^{\beta n+1}\left(\dfrac{(\omega f)'}{\omega}\right)(t)\right]\\
&=\displaystyle\sum_{n=0}^{+\infty}\left(-\mu_{\alpha}\ln p\right)^{n} 
{}^R\!{}^L\!I_{a,\omega}^{\beta n+1}\left(\dfrac{(\omega f)'}{\omega}\right)(t)
- \displaystyle\sum_{n=0}^{+\infty}\left(-\mu_{\alpha}
\ln p\right)^{n+1} {}^R\!{}^L\!I_{a,\omega}^{\beta (n+1)+1}\left(\dfrac{(\omega f)'}{\omega}\right)(t)\\
&=\displaystyle\sum_{n=0}^{+\infty}\left(-\mu_{\alpha}\ln p\right)^{n} {}^R\!{}^L
\!I_{a,\omega}^{\beta n+1}\left(\dfrac{(\omega f)'}{\omega}\right)(t)
- \displaystyle\sum_{n=1}^{+\infty}\left(-\mu_{\alpha}\ln p\right)^{n} 
{}^R\!{}^L\!I_{a,\omega}^{\beta n+1}\left(\dfrac{(\omega f)'}{\omega}\right)(t)\\
&={}^R\!{}^L\!I_{a,\omega}^{1}\left(\dfrac{(\omega f)'}{\omega}\right)(t)\\
&=\dfrac{1}{\omega(t)}\int_{a}^{t}(\omega f)'(\tau)\, \mathrm{d}\tau\\
&=f(t)-\dfrac{(\omega f)(a)}{\omega(t)},
\end{split}	
\end{equation*}	
which completes the proof.
\end{proof}

Next result provides the nth-order power fractional integral formula.

\begin{lemma}
\label{PowerInt} 
Let $n\in\mathbb{N}$ and $f\in C([a,b])$. Then, 
\begin{equation}
\label{formula1}
{}^p\!I_{a,\omega}^{n[\alpha,\beta]}f(t)=\displaystyle\sum_{m=0}^{n}C^{m}_{n}
\chi(\alpha)^{n-m}\left(\ln p\cdot\varphi(\alpha)\right)^{m}
\left( {}^R\!{}^L\!I_{a,\omega}^{m\beta} f(t)\right),
\end{equation}	
where $\alpha \in [0, 1)$, $p,\beta > 0$, $t\in [a,b]$ and 
${}^p\!I_{a,\omega}^{n[\alpha,\beta]}={}^p\!I_{a,\omega}^{[\alpha,\beta]}\cdots	{}^p\!I_{a,\omega}^{[\alpha,\beta]}$, 
$n$-times.
\end{lemma}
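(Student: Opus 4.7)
The plan is to proceed by induction on $n$, exploiting the fact that Definition~\ref{PowerDIntegral} expresses the PFI as a linear combination of two commuting operators:
$$
{}^p\!I_{a,\omega}^{[\alpha,\beta]} = \chi(\alpha)\,\mathrm{Id} + \ln p\cdot\varphi(\alpha)\,{}^R\!{}^L\!I_{a,\omega}^{\beta}.
$$
Since the identity commutes with every operator, formally applying the binomial theorem to the $n$-th composition produces exactly the right-hand side of \eqref{formula1}, provided one can replace $({}^R\!{}^L\!I_{a,\omega}^{\beta})^m$ by ${}^R\!{}^L\!I_{a,\omega}^{m\beta}$. Thus the proof reduces to one structural identity plus a routine inductive bookkeeping.

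Accordingly, the first step would be to verify the semigroup property of the weighted Riemann--Liouville integral:
$$
{}^R\!{}^L\!I_{a,\omega}^{\beta_1}\,{}^R\!{}^L\!I_{a,\omega}^{\beta_2}f(t) = {}^R\!{}^L\!I_{a,\omega}^{\beta_1+\beta_2}f(t).
$$
This reduces immediately to the classical case: writing ${}^R\!{}^L\!I_{a,\omega}^{\beta}f(t) = \tfrac{1}{\omega(t)} I^{\beta}_a(\omega f)(t)$ with $I^{\beta}_a$ the ordinary Riemann--Liouville integral, the intermediate factors of $\omega$ and $\tfrac{1}{\omega}$ cancel, and the classical identity $I^{\beta_1}_a I^{\beta_2}_a = I^{\beta_1+\beta_2}_a$ (established via Fubini's theorem and the Beta integral) finishes the argument. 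Iterating yields $({}^R\!{}^L\!I_{a,\omega}^{\beta})^m = {}^R\!{}^L\!I_{a,\omega}^{m\beta}$ for every $m\in\mathbb{N}$, with the convention ${}^R\!{}^L\!I_{a,\omega}^{0}=\mathrm{Id}$ for $m=0$.

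With this in hand, the induction is routine. The base case $n=1$ is just Definition~\ref{PowerDIntegral}. For the induction step, assuming \eqref{formula1} for $n$, I would apply ${}^p\!I_{a,\omega}^{[\alpha,\beta]}$ termwise, obtaining
\begin{align*}
{}^p\!I_{a,\omega}^{(n+1)[\alpha,\beta]}f(t)
&= \sum_{m=0}^{n}C^{m}_{n}\chi(\alpha)^{n+1-m}\bigl(\ln p\cdot\varphi(\alpha)\bigr)^{m}\,{}^R\!{}^L\!I_{a,\omega}^{m\beta}f(t) \\
&\quad + \sum_{m=0}^{n}C^{m}_{n}\chi(\alpha)^{n-m}\bigl(\ln p\cdot\varphi(\alpha)\bigr)^{m+1}\,{}^R\!{}^L\!I_{a,\omega}^{(m+1)\beta}f(t),
\end{align*}
then shift the summation index in the second sum and invoke Pascal's identity $C^{m}_{n}+C^{m-1}_{n}=C^{m}_{n+1}$ to collect the coefficients of each ${}^R\!{}^L\!I_{a,\omega}^{m\beta}f(t)$ into the form required for $n+1$.

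The only genuine obstacle is the semigroup property of the weighted Riemann--Liouville integral; once this is secured, the remainder is the transparent binomial expansion of a sum of two commuting operators, with just index bookkeeping in the inductive step. I anticipate no further technical difficulty.
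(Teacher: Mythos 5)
Your proposal is correct and follows essentially the same route as the paper: induction on $n$ using Definition~\ref{PowerDIntegral}, termwise application of the operator, an index shift, and Pascal's identity $C^{m}_{n}+C^{m-1}_{n}=C^{m}_{n+1}$. The only difference is that you explicitly isolate and verify the semigroup property ${}^R\!{}^L\!I_{a,\omega}^{\beta_1}\,{}^R\!{}^L\!I_{a,\omega}^{\beta_2}={}^R\!{}^L\!I_{a,\omega}^{\beta_1+\beta_2}$, which the paper's inductive step uses tacitly when it turns ${}^R\!{}^L\!I_{a,\omega}^{\beta}\bigl({}^R\!{}^L\!I_{a,\omega}^{m\beta}f\bigr)$ into ${}^R\!{}^L\!I_{a,\omega}^{(m+1)\beta}f$; making that step explicit is a small improvement in rigor, not a divergence in method.
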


\begin{proof} 
For $n=0$, formula \eqref{formula1} holds. Indeed, 
one has ${}^p\!I_{a,\omega}^{0\times[\alpha,\beta]}f(t)=f(t)$ and 	
$$
\displaystyle\sum_{m=0}^{0}C^{m}_{0}\chi(\alpha)^{0-m}\left(\ln p
\cdot\varphi(\alpha)\right)^{m}\left( {}^R\!{}^L\!I_{a,\omega}^{m\beta} 
f(t)\right)={}^R\!{}^L\!I_{a,\omega}^{0\times\beta} f(t)=f(t).
$$	
Now, one assumes that formula \eqref{formula1} is satisfied and we prove that 
\begin{equation*}
{}^p\!I_{a,\omega}^{(n+1)[\alpha,\beta]}f(t)
=\displaystyle\sum_{m=0}^{n+1}C^{m}_{n+1}\chi(\alpha)^{n+1-m}
\left(\ln p\cdot\varphi(\alpha)\right)^{m}\left( {}^R\!{}^L\!I_{a,\omega}^{m\beta} f(t)\right)
\end{equation*}
holds true. Indeed, from Definition~\ref{PowerDIntegral}, we have 
\begin{equation*}
\begin{split}
{}^p\!I_{a,\omega}^{(n+1)[\alpha,\beta]}f(t)
&= \chi(\alpha)\left({}^p\!I_{a,\omega}^{n[\alpha,\beta]}f(t)\right)
+\ln p\cdot\varphi(\alpha) {}^R\!{}^L\!I_{a,\omega}^{\beta}\left({}^p\!I_{a,\omega}^{n[\alpha,\beta]}f(t)  \right)\\
&=\chi(\alpha)\left(\displaystyle\sum_{m=0}^{n}C^{m}_{n}\chi(\alpha)^{n-m}
\left(\ln p\cdot\varphi(\alpha)\right)^{m}\left( {}^R\!{}^L\!I_{a,\omega}^{m\beta} f(t)\right)\right)\\
&\quad+\ln p\cdot\varphi(\alpha) {}^R\!{}^L\!I_{a,\omega}^{\beta}\left(\displaystyle
\sum_{m=0}^{n}C^{m}_{n}\chi(\alpha)^{n-m}\left(\ln p
\cdot\varphi(\alpha)\right)^{m}\left( {}^R\!{}^L\!I_{a,\omega}^{m\beta} f(t)\right)  \right)\\
&=\displaystyle\sum_{m=0}^{n}C^{m}_{n}\chi(\alpha)^{n+1-m}\left(
\ln p\cdot\varphi(\alpha)\right)^{m}\left( {}^R\!{}^L\!I_{a,\omega}^{m\beta} f(t)\right)
+\displaystyle\sum_{m=0}^{n}C^{m}_{n}\chi(\alpha)^{n-m}\left(\ln p
\cdot\varphi(\alpha)\right)^{m+1}\left( {}^R\!{}^L\!I_{a,\omega}^{m\beta} f(t)\right)\\
&=\chi(\alpha)^{n+1}f(t)+\displaystyle\sum_{m=1}^{n}C^{m}_{n}\chi(\alpha)^{n+1-m}
\left(\ln p\cdot\varphi(\alpha)\right)^{m}\left( {}^R\!{}^L\!I_{a,\omega}^{m\beta} f(t)\right)\\
&\quad+\displaystyle\sum_{m=1}^{n}C^{m-1}_{n}\chi(\alpha)^{n+1-m}
\left(\ln p\cdot\varphi(\alpha)\right)^{m}\left( {}^R\!{}^L\!I_{a,\omega}^{m\beta} 
f(t)\right)+\left(\ln p\cdot\varphi(\alpha)\right)^{n+1}\left( {}^R\!{}^L\!I_{a,\omega}^{(n+1)\beta} f(t)\right).
\end{split}	
\end{equation*}	
Using the fact that $C^{m}_{n+1}=C^{m}_{n}+C^{m-1}_{n}$, it follows that 
$$
{}^p\!I_{a,\omega}^{(n+1)[\alpha,\beta]}f(t)=\displaystyle\sum_{m=0}^{n+1}C^{m}_{n+1}
\chi(\alpha)^{n+1-m}\left(\ln p\cdot\varphi(\alpha)\right)^{m}
\left( {}^R\!{}^L\!I_{a,\omega}^{m\beta} f(t)\right),
$$
which completes the proof.
\end{proof}

The following result allows us to easily construct our generalized Taylor's formula via power fractional derivatives.

\begin{theorem} 
Assume that  ${}^p\!D_{a,\omega}^{n[\alpha,\beta]}f\in C([a,b])$ 
and ${}^p\!D_{a,\omega}^{(n+1)[\alpha,\beta]}\in C([a,b])$ for 
$\alpha \in [0, 1)$, and $p,\beta > 0$. Then it holds that
\begin{equation}
\label{formula2}
\begin{split}
{}^p\!I_{a,\omega}^{n[\alpha,\beta]}{}^p\!D_{a,\omega}^{n[\alpha,\beta]}f(t)
&-	I_{a,\omega}^{(n+1)[\alpha,\beta]}{}^p\!D_{a,\omega}^{(n+1)[\alpha,\beta]}f(t)\\
&=\dfrac{\omega(a)}{\omega(t)}\left({}^p\!D_{a,\omega}^{n[\alpha,\beta]}f(a)\right)
\displaystyle\sum_{m=0}^{n}C^{m}_{n}\chi(\alpha)^{n-m}\left(\ln 
p\cdot\varphi(\alpha)\right)^{m}\left( 
\dfrac{(t-a)^{m\beta}}{\Gamma(m\beta+1)}\right),
\end{split}	
\end{equation}	
where $t\in [a,b]$ and ${}^p\!D_{a,\omega}^{n[\alpha,\beta]}={}^p\!D_{a,\omega}^{[\alpha,\beta]}
\cdots	{}^p\!D_{a,\omega}^{[\alpha,\beta]}$, $n$-times. 	
\end{theorem}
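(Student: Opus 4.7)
The plan is to treat the identity as a one-step consequence of the inverse property in Proposition~\ref{PFDandPFI}, lifted to $n$th order by applying the iterated power integral and then expanded via Lemma~\ref{PowerInt}.

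Concretely, I would first apply Proposition~\ref{PFDandPFI} to the continuous function $g := {}^p\!D_{a,\omega}^{n[\alpha,\beta]}f$, which gives
$$
{}^p\!I_{a,\omega}^{[\alpha,\beta]}\,{}^p\!D_{a,\omega}^{(n+1)[\alpha,\beta]}f(t)
= {}^p\!D_{a,\omega}^{n[\alpha,\beta]}f(t)
- \dfrac{\omega(a)\,{}^p\!D_{a,\omega}^{n[\alpha,\beta]}f(a)}{\omega(t)}.
$$
Applying ${}^p\!I_{a,\omega}^{n[\alpha,\beta]}$ to both sides, pulling the constant $\omega(a)\,{}^p\!D_{a,\omega}^{n[\alpha,\beta]}f(a)$ out by linearity, and rearranging, the left-hand side of~\eqref{formula2} becomes
$$
{}^p\!I_{a,\omega}^{n[\alpha,\beta]}{}^p\!D_{a,\omega}^{n[\alpha,\beta]}f(t) - {}^p\!I_{a,\omega}^{(n+1)[\alpha,\beta]}{}^p\!D_{a,\omega}^{(n+1)[\alpha,\beta]}f(t)
= \omega(a)\,{}^p\!D_{a,\omega}^{n[\alpha,\beta]}f(a)\cdot {}^p\!I_{a,\omega}^{n[\alpha,\beta]}\!\left(\dfrac{1}{\omega}\right)(t),
$$
so the problem reduces to computing the iterated power integral of the function $1/\omega$.

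For this final piece, I would invoke Lemma~\ref{PowerInt} with $f\mapsto 1/\omega$, yielding
$$
{}^p\!I_{a,\omega}^{n[\alpha,\beta]}\!\left(\dfrac{1}{\omega}\right)(t)
= \sum_{m=0}^{n}C_{n}^{m}\,\chi(\alpha)^{n-m}\bigl(\ln p\cdot\varphi(\alpha)\bigr)^{m}\,{}^R\!{}^L\!I_{a,\omega}^{m\beta}\!\left(\dfrac{1}{\omega}\right)(t),
$$
and the weighted Riemann--Liouville integral collapses because $(\omega\cdot 1/\omega)(\tau)=1$, giving
$$
{}^R\!{}^L\!I_{a,\omega}^{m\beta}\!\left(\dfrac{1}{\omega}\right)(t)
= \dfrac{1}{\omega(t)\,\Gamma(m\beta)}\int_{a}^{t}(t-\tau)^{m\beta-1}\,\mathrm{d}\tau
= \dfrac{1}{\omega(t)}\cdot\dfrac{(t-a)^{m\beta}}{\Gamma(m\beta+1)}
$$
for $m\geq 1$, with the $m=0$ term interpreted through the convention $(t-a)^{0}/\Gamma(1)=1$. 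Substitution then reproduces exactly the right-hand side of~\eqref{formula2}. The main obstacle is mostly notational bookkeeping: one has to keep track of the iterated compositions ${}^p\!I_{a,\omega}^{n[\alpha,\beta]}$ and factor the boundary constant out cleanly. The crucial simplification is the cancellation inside ${}^R\!{}^L\!I_{a,\omega}^{m\beta}(1/\omega)$, which turns an intimidating weighted integral into an elementary power of $(t-a)$; once that observation is made, the identity follows by pure linearity.
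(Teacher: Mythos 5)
Your proposal is correct and follows essentially the same route as the paper: both reduce the left-hand side via Proposition~\ref{PFDandPFI} applied to ${}^p\!D_{a,\omega}^{n[\alpha,\beta]}f$, factor out the boundary constant, and then evaluate ${}^p\!I_{a,\omega}^{n[\alpha,\beta]}(1/\omega)$ through Lemma~\ref{PowerInt} together with the collapse of the weighted Riemann--Liouville integral of $1/\omega$. The only difference is cosmetic (you apply the proposition first and then the $n$-fold integral, whereas the paper factors the $n$-fold integral out first), and your explicit evaluation of ${}^R\!{}^L\!I_{a,\omega}^{m\beta}(1/\omega)$ is a welcome detail the paper leaves implicit.
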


\begin{proof}
We have 	
\begin{equation*}
\begin{split}
{}^p\!I_{a,\omega}^{n[\alpha,\beta]}{}^p\!D_{a,\omega}^{n[\alpha,\beta]}f(t)
-{}^p\!I_{a,\omega}^{(n+1)[\alpha,\beta]}{}^p\!D_{a,\omega}^{(n+1)[\alpha,\beta]}f(t)
&={}^p\!I_{a,\omega}^{n[\alpha,\beta]}\left({}^p\!D_{a,\omega}^{n[\alpha,\beta]}f(t)
-{}^p\!I_{a,\omega}^{[\alpha,\beta]}{}^p\!D_{a,\omega}^{(n+1)[\alpha,\beta]}f(t)   \right)\\
&={}^p\!I_{a,\omega}^{n[\alpha,\beta]}\left(  {}^p\!D_{a,\omega}^{n[\alpha,\beta]}
f(t)- {}^p\!I_{a,\omega}^{[\alpha,\beta]}{}^p\!D_{a,\omega}^{[\alpha,\beta]}
\left({}^p\!D_{a,\omega}^{n[\alpha,\beta]}f(t)   \right)\right).
\end{split}	
\end{equation*}		
Using Proposition~\ref{PFDandPFI}, one obtains that
\begin{equation*}
\begin{split}
{}^p\!I_{a,\omega}^{n[\alpha,\beta]}{}^p\!D_{a,\omega}^{n[\alpha,\beta]}f(t)
-{}^p\!I_{a,\omega}^{(n+1)[\alpha,\beta]}{}^p\!D_{a,\omega}^{(n+1)[\alpha,\beta]}f(t)
&={}^p\!I_{a,\omega}^{n[\alpha,\beta]}\left(\dfrac{\omega(a){}^p\!D_{a,\omega}^{n[\alpha,\beta]}f(a)}{\omega(t)}\right)\\
&=\omega(a)\left({}^p\!D_{a,\omega}^{n[\alpha,\beta]}f(a)\right){}^p\!I_{a,\omega}^{n[\alpha,\beta]}\left(\dfrac{1}{\omega(t)}\right).
\end{split}	
\end{equation*}	
Then, by virtue of Lemma~\ref{PowerInt}, it follows
\begin{equation*}
\begin{split}
{}^p\!I_{a,\omega}^{n[\alpha,\beta]}{}^p\!D_{a,\omega}^{n[\alpha,\beta]}f(t)
&-	{}^p\!I_{a,\omega}^{(n+1)[\alpha,\beta]}{}^p\!D_{a,\omega}^{(n+1)[\alpha,\beta]}f(t)\\
&=\omega(a)\left({}^p\!D_{a,\omega}^{n[\alpha,\beta]}f(a)\right)\displaystyle
\sum_{m=0}^{n}C^{m}_{n}\chi(\alpha)^{n-m}\left(\ln p\cdot\varphi(\alpha)\right)^{m}
\left( {}^R\!{}^L\!I_{a,\omega}^{m\beta} \left(\dfrac{1}{\omega(t)}\right)\right)\\
&=\dfrac{\omega(a)}{\omega(t)}\left({}^p\!D_{a,\omega}^{n[\alpha,\beta]}f(a)\right)
\displaystyle\sum_{m=0}^{n}C^{m}_{n}\chi(\alpha)^{n-m}
\left(\ln p\cdot\varphi(\alpha)\right)^{m}\left( 
\dfrac{(t-a)^{m\beta}}{\Gamma(m\beta+1)}\right).
\end{split}	
\end{equation*}		
The proof is complete.
\end{proof}

Now, we are able to provide our main result.

\begin{theorem}[Taylor's formula for power fractional derivatives]
\label{TaylorTh}
Assume that ${}^p\!D_{a,\omega}^{m[\alpha,\beta]}\in C([a,b])$ 
for $\alpha \in [0, 1)$, $p,\beta > 0$ and $m=0,1,\cdots,n+1$. 
Then, 
\begin{equation}
\label{Taylor}
\begin{split}
f(t)&=\dfrac{1}{\omega(t)}\left[\omega(a)\displaystyle\sum_{l=0}^{n}{}^p\!D_{a,\omega}^{l[\alpha,\beta]}
f(a)\displaystyle\sum_{m=0}^{l}C^{m}_{l}\chi(\alpha)^{l-m}\left(\ln p\cdot\varphi(\alpha)\right)^{m}
\dfrac{(t-a)^{m\beta}}{\Gamma(m\beta+1)}\right.\\
&\quad+\left. \omega(\lambda) {}^p\!D_{a,\omega}^{(n+1)[\alpha,\beta]}
f(\lambda)\displaystyle\sum_{m=0}^{n+1}C^{m}_{n+1}\chi(\alpha)^{n+1-m}
\left(\ln p\cdot\varphi(\alpha)\right)^{m}\dfrac{(t-a)^{m\beta}}{\Gamma(m\beta+1)} \right],
\end{split}
\end{equation}
where $t\in [a,b]$, $\lambda \in [a,t]$, and ${}^p\!D_{a,\omega}^{l[\alpha,\beta]}
={}^p\!D_{a,\omega}^{[\alpha,\beta]}\cdots	{}^p\!D_{a,\omega}^{[\alpha,\beta]}$, $l$-times.		
\end{theorem}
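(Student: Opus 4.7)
The strategy is to use identity \eqref{formula2} as the summand of a telescoping series and then convert the leftover power fractional integral into a Lagrange-type remainder by means of a mean value theorem.

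First, I would apply identity \eqref{formula2} with the running index $l$ in place of $n$ and sum from $l=0$ to $l=n$. The left-hand side collapses telescopically to
$$f(t)-{}^p\!I_{a,\omega}^{(n+1)[\alpha,\beta]}{}^p\!D_{a,\omega}^{(n+1)[\alpha,\beta]}f(t),$$
since ${}^p\!I_{a,\omega}^{0[\alpha,\beta]}{}^p\!D_{a,\omega}^{0[\alpha,\beta]}f(t)=f(t)$, while the right-hand side reproduces exactly the polynomial (first) bracket of \eqref{Taylor}, namely $\frac{\omega(a)}{\omega(t)}\sum_{l=0}^{n}{}^p\!D_{a,\omega}^{l[\alpha,\beta]}f(a)\sum_{m=0}^{l}C_{l}^{m}\chi(\alpha)^{l-m}(\ln p\cdot\varphi(\alpha))^{m}\frac{(t-a)^{m\beta}}{\Gamma(m\beta+1)}$. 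It then only remains to rewrite the surviving term ${}^p\!I_{a,\omega}^{(n+1)[\alpha,\beta]}{}^p\!D_{a,\omega}^{(n+1)[\alpha,\beta]}f(t)$ in the Lagrange form appearing as the second bracket of \eqref{Taylor}.

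Second, set $g:={}^p\!D_{a,\omega}^{(n+1)[\alpha,\beta]}f$, which is continuous on $[a,b]$ by hypothesis, and expand using Lemma~\ref{PowerInt}:
$${}^p\!I_{a,\omega}^{(n+1)[\alpha,\beta]}g(t)=\sum_{m=0}^{n+1}C^{m}_{n+1}\chi(\alpha)^{n+1-m}(\ln p\cdot\varphi(\alpha))^{m}\,{}^R\!{}^L\!I_{a,\omega}^{m\beta}g(t).$$
For each term with $m\geq 1$, writing ${}^R\!{}^L\!I_{a,\omega}^{m\beta}g(t)=\frac{1}{\Gamma(m\beta)\omega(t)}\int_{a}^{t}(t-\tau)^{m\beta-1}(\omega g)(\tau)\,\mathrm{d}\tau$ and exploiting positivity of the kernel $(t-\tau)^{m\beta-1}$ on $(a,t)$, the classical first mean value theorem for integrals sandwiches each such integral between $m_g$ and $M_g$, the minimum and maximum of the continuous function $\omega g$ on $[a,t]$; the same bounds hold trivially for $m=0$ since $\omega(t)\,{}^R\!{}^L\!I_{a,\omega}^{0}g(t)=(\omega g)(t)$. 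Pulling these bounds through the linear combination and invoking the intermediate value theorem for $\omega g$ on $[a,t]$ then yields a single $\lambda\in[a,t]$ with
$${}^p\!I_{a,\omega}^{(n+1)[\alpha,\beta]}g(t)=\frac{\omega(\lambda)g(\lambda)}{\omega(t)}\sum_{m=0}^{n+1}C^{m}_{n+1}\chi(\alpha)^{n+1-m}(\ln p\cdot\varphi(\alpha))^{m}\frac{(t-a)^{m\beta}}{\Gamma(m\beta+1)},$$
which is exactly the Lagrange term in \eqref{Taylor}. Factoring $1/\omega(t)$ outside and adding the polynomial part from the first step delivers the claimed identity.

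The main obstacle is this last mean value step: collapsing the $n+1$ a priori distinct sample points $\xi_m$ produced by the termwise mean value theorem into a common $\lambda$ requires the coefficients $C^{m}_{n+1}\chi(\alpha)^{n+1-m}(\ln p\cdot\varphi(\alpha))^{m}$ to share a common sign in $m$, so that the weighted sum defines a legitimate convex combination of values of $\omega g$. This is automatic when $p\geq 1$, since $\chi(\alpha),\varphi(\alpha)\geq 0$; for $0<p<1$ the sign alternates with $m$ and one has to split the sum according to the parity of $m$ and redo the sandwich argument on each sub-sum. Once this subtlety is dealt with, the rest of the proof is straightforward bookkeeping based on the lemmas already established.
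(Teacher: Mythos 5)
Your proposal follows essentially the same route as the paper: sum identity \eqref{formula2} over $l=0,\dots,n$, telescope, expand the surviving term ${}^p\!I_{a,\omega}^{(n+1)[\alpha,\beta]}{}^p\!D_{a,\omega}^{(n+1)[\alpha,\beta]}f(t)$ via Lemma~\ref{PowerInt}, and finish with an integral mean value argument. The first two steps match the paper exactly. Where you go beyond it is the last step: the paper simply invokes ``the integral mean value theorem'' to replace $\sum_{m=0}^{n+1}C^m_{n+1}\chi(\alpha)^{n+1-m}(\ln p\cdot\varphi(\alpha))^m\,{}^R\!{}^L\!I_{a,\omega}^{m\beta}g(t)$ by $\omega(\lambda)g(\lambda)/\omega(t)$ times the corresponding polynomial, whereas you make the sandwich explicit and correctly observe that collapsing the $n+2$ termwise sample points into one common $\lambda$ requires all the weights $C^m_{n+1}\chi(\alpha)^{n+1-m}(\ln p\cdot\varphi(\alpha))^m$ to share a sign, which (since $\chi(\alpha)>0$ and $\varphi(\alpha)\ge 0$) holds exactly when $\ln p\ge 0$, i.e.\ $p\ge 1$. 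Up to that point your argument is complete and, for $p\ge 1$, more rigorous than the published one.

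Your proposed repair for $0<p<1$ does not, however, close the gap. Splitting the sum by the parity of $m$ and sandwiching each sub-sum produces two sample points $\lambda_1,\lambda_2$, one for the nonnegative-weight part with polynomial coefficient $S_+$ and one for the nonpositive-weight part with coefficient $S_-$; the resulting value $(\omega g)(\lambda_1)S_++(\omega g)(\lambda_2)S_-$ need not lie in the range of $\lambda\mapsto(\omega g)(\lambda)(S_++S_-)$. Indeed, with $m_g<M_g$ the extremes $m_gS_++M_gS_-$ and $M_gS_++m_gS_-$ strictly contain the interval $[m_g(S_++S_-),\,M_g(S_++S_-)]$ whenever $S_-<0$, so the intermediate value theorem cannot be invoked to produce a single $\lambda$. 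To be fair, the paper's proof is silent on this point and therefore carries the same unaddressed issue for $0<p<1$; your write-up is the more honest of the two, but as it stands the argument establishes the theorem only for $p\ge 1$, and the case $0<p<1$ would require either a different representation of the remainder or an additional hypothesis guaranteeing a fixed sign of the weights.
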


\begin{proof}
By virtue of formula \eqref{formula2}, one has
\begin{equation*}
\begin{split}
\displaystyle\sum_{l=0}^{n}\left({}^p\!I_{a,\omega}^{l[\alpha,\beta]}{}^p\!D_{a,\omega}^{l[\alpha,\beta]}f(t)\right.
&-\left.	{}^p\!I_{a,\omega}^{(l+1)[\alpha,\beta]}{}^p\!D_{a,\omega}^{(l+1)[\alpha,\beta]}f(t)\right)\\
&=\dfrac{\omega(a)}{\omega(t)}\sum_{l=0}^{n}\left({}^p\!D_{a,\omega}^{l[\alpha,\beta]}f(a)\right)
\displaystyle\sum_{m=0}^{l}C^{m}_{l}\chi(\alpha)^{l-m}\left(\ln p\cdot\varphi(\alpha)\right)^{m}
\left(\dfrac{(t-a)^{m\beta}}{\Gamma(m\beta+1)}\right),
\end{split}	
\end{equation*}
which implies
$$
f(t)-{}^p\!I_{a,\omega}^{(n+1)[\alpha,\beta]}{}^p\!D_{a,\omega}^{(n+1)[\alpha,\beta]}f(t)
=\dfrac{\omega(a)}{\omega(t)}\sum_{l=0}^{n}\left({}^p\!D_{a,\omega}^{n[\alpha,\beta]}
f(a)\right)\displaystyle\sum_{m=0}^{l}C^{m}_{l}\chi(\alpha)^{l-m}
\left(\ln p\cdot\varphi(\alpha)\right)^{m}\left( 
\dfrac{(t-a)^{m\beta}}{\Gamma(m\beta+1)}\right).
$$
Moreover, from Lemma~\ref{PowerInt}, one obtains 
\begin{equation*}
\begin{split}
f(t)&=\dfrac{\omega(a)}{\omega(t)}\sum_{l=0}^{n}\left({}^p\!D_{a,\omega}^{l[\alpha,\beta]}
f(a)\right)\displaystyle\sum_{m=0}^{l}C^{m}_{l}\chi(\alpha)^{l-m}
\left(\ln p\cdot\varphi(\alpha)\right)^{m}\left( 
\dfrac{(t-a)^{m\beta}}{\Gamma(m\beta+1)}\right)\\
&+\displaystyle\sum_{m=0}^{n+1}C^{m}_{n+1}\chi(\alpha)^{n+1-m}
\left(\ln p\cdot\varphi(\alpha)\right)^{m}\left( 
{}^R\!{}^L\!I_{a,\omega}^{m\beta} {}^p\!D_{a,\omega}^{(n+1)[\alpha,\beta]}f(t)\right).
\end{split}	
\end{equation*}
Then, by applying the integral mean value theorem, we deduce that
\begin{equation*}
\begin{split}
f(t)&=\dfrac{1}{\omega(t)}\left[\omega(a)\displaystyle\sum_{l=0}^{n}
{}^p\!D_{a,\omega}^{l[\alpha,\beta]}f(a)\displaystyle\sum_{m=0}^{l}C^{m}_{l}
\chi(\alpha)^{l-m}\left(\ln p\cdot\varphi(\alpha)\right)^{m}
\dfrac{(t-a)^{m\beta}}{\Gamma(m\beta+1)}\right.\\
&\quad+\left. \omega(\lambda) {}^p\!D_{a,\omega}^{(n+1)[\alpha,\beta]}
f(\lambda)\displaystyle\sum_{m=0}^{n+1}C^{m}_{n+1}\chi(\alpha)^{n+1-m}
\left(\ln p\cdot\varphi(\alpha)\right)^{m}\dfrac{(t-a)^{m\beta}}{\Gamma(m\beta+1)} \right],
\end{split}
\end{equation*} 
which completes the proof.
\end{proof}

\begin{remark}
Our Taylor's formula for the power fractional derivative, 
as stated by Theorem~\ref{TaylorTh}, includes most of  
Taylor's formulas without singular kernels
that exist in the literature, such us
\begin{itemize}
\item Taylor's formula involving
the generalized weighted fractional derivative \cite{ZineTaylor}, 
obtained when $p=e$;	
		
\item Taylor's formula involving the weighted Atangana--Baleanu derivative 
in Caputo sense \cite{ALRefai1,ZineTaylor}, obtained when $p=e$ and $\beta=\alpha$;
		
\item Taylor's formula involving the Atangana--Baleanu derivative 
in Caputo sense \cite{FernandezTaylor}, 
obtained when $p=e$, $\beta=\alpha$, and $\omega(t) \equiv 1$.
\end{itemize}
\end{remark}

As a consequence of Theorem~\ref{TaylorTh}, we obtain a
mean value theorem in the framework of power fractional derivative operators.

\begin{corollary}[Mean value theorem for power fractional derivatives]
\label{maeanvalue}
Suppose that $f\in C([a,b])$ and ${}^p\!D_{a,\omega}^{[\alpha,\beta]}f
\in C([a,b])$ for $\alpha \in [0, 1)$ and $p,\beta > 0$. Then, 
\begin{equation}
\label{MVT}
f(t)=\dfrac{1}{\omega(t)}\left[\omega(a)f(a)+ \omega(\lambda) {}^p\!D_{a,\omega}^{[\alpha,\beta]}
f(\lambda)\left(\chi(\alpha)+\ln p\cdot\varphi(\alpha)\dfrac{(t-a)^{\beta}}{\Gamma(\beta+1)} \right) \right],
\end{equation}
where $t\in [a,b]$ and $\lambda \in [a,t]$.		
\end{corollary}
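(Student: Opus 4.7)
The plan is to obtain the corollary as the simplest nontrivial instance of Theorem~\ref{TaylorTh}, namely by specializing the generalized Taylor expansion to $n=0$. First I would check that the hypotheses of Theorem~\ref{TaylorTh} with $n=0$ are met: one needs ${}^p\!D_{a,\omega}^{m[\alpha,\beta]}f\in C([a,b])$ for $m=0,1$, which is precisely the assumption that $f\in C([a,b])$ (giving ${}^p\!D_{a,\omega}^{0[\alpha,\beta]}f = f$) together with ${}^p\!D_{a,\omega}^{[\alpha,\beta]}f\in C([a,b])$. So no additional regularity is required and the invocation is legitimate.

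Next I would simply write down formula \eqref{Taylor} with $n=0$ and collapse the two double sums term by term. The outer sum $\sum_{l=0}^{0}$ in the first bracket reduces to the single contribution $l=0$, whose inner sum $\sum_{m=0}^{0}$ consists of the lone term $m=0$: this produces $C_{0}^{0}\chi(\alpha)^{0}(\ln p\cdot\varphi(\alpha))^{0}(t-a)^{0}/\Gamma(1)=1$, multiplied by ${}^p\!D_{a,\omega}^{0[\alpha,\beta]}f(a)=f(a)$ and the factor $\omega(a)$. So the leading block contracts to $\omega(a)f(a)$, exactly as required.

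For the remainder term, the sum $\sum_{m=0}^{n+1}$ with $n+1=1$ has two summands. The $m=0$ contribution yields $C_{1}^{0}\chi(\alpha)^{1}(\ln p\cdot\varphi(\alpha))^{0}(t-a)^{0}/\Gamma(1)=\chi(\alpha)$, while the $m=1$ contribution yields $C_{1}^{1}\chi(\alpha)^{0}(\ln p\cdot\varphi(\alpha))^{1}(t-a)^{\beta}/\Gamma(\beta+1)=\ln p\cdot\varphi(\alpha)\,(t-a)^{\beta}/\Gamma(\beta+1)$. Adding these and multiplying by $\omega(\lambda){}^p\!D_{a,\omega}^{[\alpha,\beta]}f(\lambda)$ reproduces the second bracket of \eqref{MVT}. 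Dividing the whole expression by $\omega(t)$ then matches \eqref{MVT} verbatim.

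I do not foresee any real obstacle: the only thing to watch is the bookkeeping of the binomial coefficients and the conventions ${}^p\!D_{a,\omega}^{0[\alpha,\beta]}f=f$ and $(t-a)^{0}/\Gamma(1)=1$, which are standard. The intermediate point $\lambda\in[a,t]$ coming from the integral mean value theorem in the proof of Theorem~\ref{TaylorTh} is inherited unchanged, and the continuity hypotheses on $f$ and ${}^p\!D_{a,\omega}^{[\alpha,\beta]}f$ guarantee that this application is valid. Thus the corollary follows by direct substitution, with no further analysis required.
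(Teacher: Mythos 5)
Your proposal is correct and follows exactly the paper's own argument: the corollary is obtained by setting $n=0$ in Theorem~\ref{TaylorTh}, and your explicit evaluation of the resulting sums (giving $\omega(a)f(a)$ for the leading block and $\chi(\alpha)+\ln p\cdot\varphi(\alpha)(t-a)^{\beta}/\Gamma(\beta+1)$ for the remainder factor) is just the bookkeeping the paper leaves implicit.
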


\begin{proof}
The proof follows directly from Theorem~\ref{TaylorTh} by taking $n=0$.
\end{proof}

\begin{remark}
If we let $p=e$, $\omega(t)\equiv 1$ and $\alpha=\beta=1$ 
in Corollary~\ref{maeanvalue}, we obtain the classical mean value theorem. 
\end{remark}


\section{Application: Approximation of functions}
\label{Sec4}

In this section, we apply the developed Taylor's formula for power fractional derivatives \eqref{Taylor} 
to approximate functions at a given point. The approximation method is described in the following result.

\begin{theorem}
\label{approximation}
Suppose that ${}^p\!D_{a,\omega}^{m[\alpha,\beta]}\in C([a,b])$ for $\alpha \in [0, 1)$, 
$p,\beta > 0$ and $m=0,1,\ldots,n+1$. If $t \in [a,b]$, then
\begin{equation}
\label{Apprfunc}
f(t)\simeq {}^p\!A^{\alpha,\beta}_{n}(t)
=\dfrac{\omega(a)}{\omega(t)}\displaystyle\sum_{l=0}^{n}{}^p\!D_{a,\omega}^{l[\alpha,\beta]}
f(a)\displaystyle\sum_{m=0}^{l}C^{m}_{l}\chi(\alpha)^{l-m}\left(\ln p\cdot\varphi(\alpha)\right)^{m}
\dfrac{(t-a)^{m\beta}}{\Gamma(m\beta+1)}.
\end{equation}
In addition, the interpolation error ${}^p\!R^{\alpha,\beta}_{N}$ can be expressed as 
\begin{equation}
\label{error}
{}^p\!R^{\alpha,\beta}_{N}(t):=f(t)-{}^p\!A^{\alpha,\beta}_{N}(t)
=\omega(\lambda){}^p\!D_{a,\omega}^{(N+1)[\alpha,\beta]}f(\lambda)
\displaystyle\sum_{m=0}^{N+1}C^{m}_{N+1}\chi(\alpha)^{N+1-m}\left(\ln p
\cdot\varphi(\alpha)\right)^{m}\dfrac{(t-a)^{m\beta}}{\Gamma(m\beta+1)},
\end{equation}
where $\lambda \in [a,t]$.
\end{theorem}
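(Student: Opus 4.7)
The plan is to obtain both the approximation formula \eqref{Apprfunc} and the error expression \eqref{error} as a direct repackaging of the generalized Taylor's formula established in Theorem~\ref{TaylorTh}. Since the hypotheses of Theorem~\ref{TaylorTh} and Theorem~\ref{approximation} coincide (both demand ${}^p\!D_{a,\omega}^{m[\alpha,\beta]}f \in C([a,b])$ for $m = 0, 1, \ldots, n+1$), there is no further regularity to check; the entire argument reduces to identifying the two pieces in \eqref{Taylor} and rearranging them.

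First, I would invoke Theorem~\ref{TaylorTh} to write $f(t)$ as the sum of two bracketed expressions: the ``polynomial'' contribution, built from the values ${}^p\!D_{a,\omega}^{l[\alpha,\beta]}f(a)$ for $l = 0, 1, \ldots, n$, and the remainder contribution, built from the top-order derivative ${}^p\!D_{a,\omega}^{(n+1)[\alpha,\beta]}f(\lambda)$ evaluated at some $\lambda \in [a,t]$. Second, I would observe that the quantity ${}^p\!A^{\alpha,\beta}_n(t)$ introduced in \eqref{Apprfunc} is, by definition, exactly the first bracketed term (together with its prefactor $\omega(a)/\omega(t)$). Third, I would subtract ${}^p\!A^{\alpha,\beta}_N(t)$ from both sides of \eqref{Taylor} (now with $n$ replaced by $N$) and read off the residual; this gives \eqref{error} immediately, with the mean-value point $\lambda \in [a,t]$ inherited from Theorem~\ref{TaylorTh}.

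There is essentially no obstacle at the level of proof mechanics: the entire statement is a bookkeeping consequence of the Taylor decomposition already available. The only point worth double-checking is the consistency of the weight factor in the remainder. In \eqref{Taylor} the two contributions share the common prefactor $1/\omega(t)$, so that the difference $f(t) - {}^p\!A^{\alpha,\beta}_N(t)$ naturally inherits this prefactor; I would therefore verify that the form of \eqref{error} is read in a manner consistent with \eqref{Taylor}, and if desired, rewrite it explicitly with the factor $1/\omega(t)$ in front to make the derivation from Theorem~\ref{TaylorTh} transparent.

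In summary, the proof is a one-line application of Theorem~\ref{TaylorTh} followed by the definition of ${}^p\!A^{\alpha,\beta}_n(t)$; no new estimate, convergence argument, or induction is required beyond what has already been established in Lemma~\ref{PowerInt}, Proposition~\ref{PFDandPFI}, and Theorem~\ref{TaylorTh}.
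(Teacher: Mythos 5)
Your proposal is correct and matches the paper's own argument, which is precisely a one-line deduction from Theorem~\ref{TaylorTh}: the polynomial part defines ${}^p\!A^{\alpha,\beta}_{n}(t)$ and the remaining term is the error. Your remark about the missing $1/\omega(t)$ prefactor in \eqref{error} is a valid observation about the statement as printed (the remainder inherited from \eqref{Taylor} does carry that factor), but it does not change the proof mechanics.
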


\begin{proof}
The proof follows directly from Theorem~\ref{TaylorTh}.	
\end{proof}

In order to handle suitably our examples, we first establish the following technical lemma.

\begin{lemma}
\label{technical}
The $l$th-order of the power fractional derivative ${}^p{}^C \!D_{a,t,1}^{l[\alpha,\beta]}$ 
can be expressed as 
\begin{equation*}
{}^p \!D_{a,1}^{l[\alpha,\beta]}f(t)=\dfrac{1}{\chi(\alpha)^{l}}
\displaystyle\sum_{q=0}^{+\infty}C^{l-1}_{q+l-1}\left(
-\mu_{\alpha}\ln p\right)^{q}\left( {}^R\!{}^L\!I_{a,1}^{\beta q+1}f'(t)\right),
\end{equation*}	
where ${}^p\!D_{a,1}^{l[\alpha,\beta]}={}^p\!D_{a,1}^{[\alpha,\beta]}
\cdots	{}^p\!D_{a,1}^{[\alpha,\beta]}$, $l$-times.	
\end{lemma}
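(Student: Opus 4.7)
The plan is to proceed by induction on $l$. The base case $l=1$ reduces directly to Lemma~\ref{key1} with $\omega \equiv 1$, upon noting that $C^{0}_{q} = 1$ for every $q \geq 0$, so the summand becomes exactly $(-\mu_{\alpha}\ln p)^{q}\,{}^R\!{}^L\!I_{a,1}^{\beta q+1}f'(t)$.

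For the inductive step, assuming the formula at level $l$, I would write
$$
{}^p\!D_{a,1}^{(l+1)[\alpha,\beta]}f(t) = {}^p\!D_{a,1}^{[\alpha,\beta]}\bigl({}^p\!D_{a,1}^{l[\alpha,\beta]}f\bigr)(t)
$$
and apply Lemma~\ref{key1} to the outer derivative with $g:={}^p\!D_{a,1}^{l[\alpha,\beta]}f$ as its argument. This produces the expression $\tfrac{1}{\chi(\alpha)}\sum_{n}(-\mu_{\alpha}\ln p)^{n}\,{}^R\!{}^L\!I_{a,1}^{\beta n+1}(g')(t)$, into which the inductive hypothesis for $g$ is substituted.

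Two auxiliary facts drive the simplification. First, term-by-term differentiation gives $\tfrac{d}{dt}{}^R\!{}^L\!I_{a,1}^{\beta q+1}f'(t)={}^R\!{}^L\!I_{a,1}^{\beta q}f'(t)$, handling the $g'$ inside the outer integral (the boundary term vanishes for $q\geq 1$, and for $q=0$ it is the fundamental theorem of calculus). Second, the semigroup property of Riemann--Liouville integrals, ${}^R\!{}^L\!I_{a,1}^{\beta n+1}{}^R\!{}^L\!I_{a,1}^{\beta q}={}^R\!{}^L\!I_{a,1}^{\beta(n+q)+1}$, collapses the nested operators to a single one. The local uniform convergence provided by Lemma~\ref{key1} justifies exchanging differentiation and summation. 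After these manipulations, the outcome is the double sum
$$
\frac{1}{\chi(\alpha)^{l+1}} \sum_{n=0}^{\infty}\sum_{q=0}^{\infty} C^{l-1}_{q+l-1}(-\mu_{\alpha}\ln p)^{n+q}\, {}^R\!{}^L\!I_{a,1}^{\beta(n+q)+1}f'(t).
$$

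The last step is to reindex by $k=n+q$ and to evaluate the inner coefficient sum via the hockey stick identity $\sum_{j=0}^{k}\binom{j+l-1}{l-1}=\binom{k+l}{l}$, which converts $\sum_{n=0}^{k}C^{l-1}_{k-n+l-1}$ into $C^{l}_{k+l}$. This produces exactly the claimed formula with $l$ replaced by $l+1$. I expect the delicate point to be the careful interchange of the two summations together with the hockey stick collapse; the intermediate operations (termwise differentiation under the series and the Riemann--Liouville semigroup law) are standard but need to be invoked with the convergence hypothesis from Lemma~\ref{key1} in mind.
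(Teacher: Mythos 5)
Your proof is correct, but it takes a genuinely different route from the paper's. The paper writes ${}^p\!D_{a,1}^{l[\alpha,\beta]}$ as the $l$-th power of the series operator from Lemma~\ref{key1}, expands that power into an $l$-fold sum over indices $n_{1},\dots,n_{l}$, and reads off the coefficient $C^{l-1}_{q+l-1}$ in one step as the stars-and-bars count of ordered decompositions $n_{1}+\cdots+n_{l}=q$. You instead induct on $l$, composing one factor at a time and collapsing the resulting double sum with the hockey-stick identity $\sum_{j=0}^{k}\binom{j+l-1}{l-1}=\binom{k+l}{l}$, which is precisely the inductive form of that same combinatorial count. Both arguments ultimately rest on the same two operator identities, namely $\frac{d}{dt}\,{}^R\!{}^L\!I_{a,1}^{\beta q+1}f'={}^R\!{}^L\!I_{a,1}^{\beta q}f'$ and the Riemann--Liouville semigroup law ${}^R\!{}^L\!I_{a,1}^{\beta n+1}\,{}^R\!{}^L\!I_{a,1}^{\beta q}={}^R\!{}^L\!I_{a,1}^{\beta(n+q)+1}$; the paper invokes them silently when it equates the $l$-fold composition with a single integral of order $\beta\sum n_{k}+1$ applied to $f'$, whereas you state and justify them explicitly, including the vanishing boundary term and the separate $q=0$ case. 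Your version is longer but more careful about the analytic interchanges, since at each stage only two locally uniformly convergent series are multiplied; the paper's version is shorter and makes the combinatorial meaning of the coefficient immediately visible. Either is acceptable, and your inductive bookkeeping (base case, reindexing by $k=n+q$, and the final match with $C^{l}_{k+l}$) checks out.
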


\begin{proof}
From Lemma~\ref{key1}, one has
\begin{equation*}
\begin{split}
{}^p \!D_{a,1}^{l[\alpha,\beta]}f(t)
&=\left[\dfrac{1}{\chi(\alpha)}\displaystyle\sum_{n=0}^{+\infty}
\left(-\mu_{\alpha}\ln p\right)^{n} {}^R\!{}^L\!I_{a,\omega}^{\beta n+1}\dfrac{d}{dt}\right]^{l}f(t)\\
&=\dfrac{1}{\chi(\alpha)^{l}}\displaystyle\sum_{n_{1},\ldots,n_{l}}
\left(-\mu_{\alpha}\ln p\right)^{\sum n_{k}} {}^R\!{}^L\!I_{a,\omega}^{\beta \sum n_{k}+1}\dfrac{d}{dt}f(t)\\
&=\dfrac{1}{\chi(\alpha)^{l}}\displaystyle\sum_{q=0}^{+\infty}C^{l-1}_{q+l-1}
\left(-\mu_{\alpha}\ln p\right)^{q}\left( {}^R\!{}^L\!I_{a,1}^{\beta q+1}f'(t)\right),
\end{split}	
\end{equation*}
which completes the proof.	
\end{proof}

Now, we approximate some basic functions about the point $a=0$
using the generalized Taylor series 
given by Theorem~\ref{approximation}.

\begin{example} 
\label{ex01}
Consider the exponential function 
$$
f(t)=\exp(\delta t),\quad\delta>0.
$$
For $\omega(t)\equiv 1$, the weighted fractional integral ${}^R\!{}^L\!I_{a,1}^{\beta}$ 
coincides  with the Riemman--Liouville one 
${}^R\!{}^L\!I_{a}^{\beta}$ \cite{der5} with $\beta>0$. 
Therefore, by virtue  of Lemma~\ref{technical}, one has 
\begin{equation}
\label{key}
{}^p \!D_{a,1}^{l[\alpha,\beta]}f(t)
=\dfrac{1}{\chi(\alpha)^{l}}\displaystyle\sum_{q=0}^{+\infty}C^{l-1}_{q+l-1}
\left(-\mu_{\alpha}\ln p\right)^{q}\left( \delta^{-\beta q} \exp(\delta t)\right),
\end{equation}	
for $\alpha \in [0, 1)$ and $\beta,p>0$. Then, by applying Theorem~\ref{approximation}, 
and taking into account \eqref{key}, it follows that
\begin{equation*}
\exp(\delta t)\simeq\displaystyle\sum_{l=0}^{n}\dfrac{1}{\chi(\alpha)^{l}}
\displaystyle\sum_{q=0}^{+\infty}C^{l-1}_{q+l-1}\left(-\mu_{\alpha}
\ln p\right)^{q}\left(\delta^{-\beta q}\right)\displaystyle
\sum_{m=0}^{l}C^{m}_{l}\chi(\alpha)^{l-m}\left(\ln p\cdot\varphi(\alpha)\right)^{m}
\dfrac{t^{m\beta}}{\Gamma(m\beta+1)},
\end{equation*} 	
where the remainder has the form
\begin{equation*}
{}^p\!R^{\alpha,\beta}_{N}(t)=\dfrac{1}{\chi(\alpha)^{N+1}}\displaystyle
\sum_{q=0}^{+\infty}C^{N}_{q+N}\left(-\mu_{\alpha}\ln p\right)^{q}
\left( \delta^{-\beta q}\exp(\delta \lambda)\right)\displaystyle
\sum_{m=0}^{N+1}C^{m}_{N+1}\chi(\alpha)^{N+1-m}
\left(\ln p\cdot\varphi(\alpha)\right)^{m}\dfrac{t^{m\beta}}{\Gamma(m\beta+1)}
\end{equation*}		
with $\lambda \in [0,t]$.
\end{example}

In contrast with Example~\ref{ex01}, we now consider a cosine function
instead of an exponential.

\begin{example} 
Consider the function 
$$
f(t)=\cos(\delta t), \quad \delta>0.
$$
From Lemma~\ref{technical}, one has
\begin{equation}
\label{key2}
{}^p \!D_{a,1}^{l[\alpha,\beta]}f(t)=\dfrac{1}{\chi(\alpha)^{l}}
\displaystyle\sum_{q=0}^{+\infty}C^{l-1}_{q+l-1}\left(-\mu_{\alpha}
\ln p\right)^{q}\left( \delta^{-\beta q}\cos(\delta t-\beta q \dfrac{\pi}{2})\right),
\end{equation}
for $\alpha \in [0, 1)$ and $\beta,p>0$.  Then, using \eqref{Apprfunc}, 
\eqref{error}, and \eqref{key2}, the function $\cos(\delta t)$ 
can be approximated by
\begin{equation*}
\cos(\delta t)\simeq\displaystyle\sum_{l=0}^{n}\dfrac{1}{\chi(\alpha)^{l}}
\displaystyle\sum_{q=0}^{+\infty}C^{l-1}_{q+l-1}\left(-\mu_{\alpha}
\ln p\right)^{q}\left(\delta^{-\beta q}\cos(\beta q\dfrac{\pi}{2})\right)
\displaystyle\sum_{m=0}^{l}C^{m}_{l}\chi(\alpha)^{l-m}\left(\ln p\cdot\varphi(\alpha)\right)^{m}
\dfrac{t^{m\beta}}{\Gamma(m\beta+1)},
\end{equation*} 	
where the interpolation error is given by
\begin{equation*}
{}^p\!R^{\alpha,\beta}_{N}(t)=\dfrac{1}{\chi(\alpha)^{N+1}}
\displaystyle\sum_{q=0}^{+\infty}C^{N}_{q+N}\left(-\mu_{\alpha}
\ln p\right)^{q}\left( \delta^{-\beta q}\cos(\delta \lambda
-\beta q\dfrac{\pi}{2})\right)\displaystyle\sum_{m=0}^{N+1}C^{m}_{N+1}
\chi(\alpha)^{N+1-m}\left(\ln p\cdot\varphi(\alpha)\right)^{m}
\dfrac{t^{m\beta}}{\Gamma(m\beta+1)}
\end{equation*}			
with $\lambda \in [0,t]$.	
\end{example}

Finally, we expand the sine function as a power Taylor series.

\begin{example}
Consider the function
$$
f(t)=\sin(\delta t),\quad \delta>0.
$$
From Lemma~\ref{technical}, we have
\begin{equation}
\label{key3}
{}^p \!D_{a,1}^{l[\alpha,\beta]}f(t)=\dfrac{1}{\chi(\alpha)^{l}}
\displaystyle\sum_{q=0}^{+\infty}C^{l-1}_{q+l-1}\left(-\mu_{\alpha}
\ln p\right)^{q}\left( \delta^{-\beta q}\sin(\delta t-\beta q \dfrac{\pi}{2})\right),
\end{equation}
for $\alpha \in [0, 1)$ and $\beta,p>0$. Then, using Theorem~\ref{approximation}, 
the approximation of the function $\sin(\delta t)$, in the neighborhood of $a=0$, is given by
\begin{equation}
\label{interp}
\sin(\delta t)\simeq\displaystyle\sum_{l=0}^{n}\dfrac{1}{\chi(\alpha)^{l}}
\displaystyle\sum_{q=0}^{+\infty}C^{l-1}_{q+l-1}\left(-\mu_{\alpha}\ln p\right)^{q}
\left(-\delta^{-\beta q}\sin(\beta q\dfrac{\pi}{2})\right)\displaystyle
\sum_{m=0}^{l}C^{m}_{l}\chi(\alpha)^{l-m}\left(\ln p\cdot\varphi(\alpha)\right)^{m}
\dfrac{t^{m\beta}}{\Gamma(m\beta+1)}
\end{equation} 	
with the remainder
\begin{equation*}
{}^p\!R^{\alpha,\beta}_{N}(t)
=\dfrac{1}{\chi(\alpha)^{N+1}}\displaystyle\sum_{q=0}^{+\infty}C^{N}_{q+N}
\left(-\mu_{\alpha}\ln p\right)^{q}\left( \delta^{-\beta q}
\sin(\delta \lambda-\beta q\dfrac{\pi}{2})\right)\displaystyle
\sum_{m=0}^{N+1}C^{m}_{N+1}\chi(\alpha)^{N+1-m}\left(
\ln p\cdot\varphi(\alpha)\right)^{m}\dfrac{t^{m\beta}}{\Gamma(m\beta+1)},
\end{equation*}	
where $\lambda \in [0,t]$.

We plot the function $f(t)=\sin(\delta t)$ and its Taylor polynomials ${}^p\!A^{\alpha,\beta}_{n}(t)$ 
given by \eqref{interp} around the origin $t=0$, for $\delta=1$, $\alpha=0.1$, $\beta=1.5$ 
and different values of the order $n$, over the interval~ $[0,1]$. We also consider different 
values of the power parameter $p$ to see its effect on the function approximation. 
The results are summarized in Figure~\ref{Figure1}.
\begin{figure}[H]
\begin{center}	
\begin{minipage}{0.47\linewidth}
\centering
\includegraphics[scale=0.57]{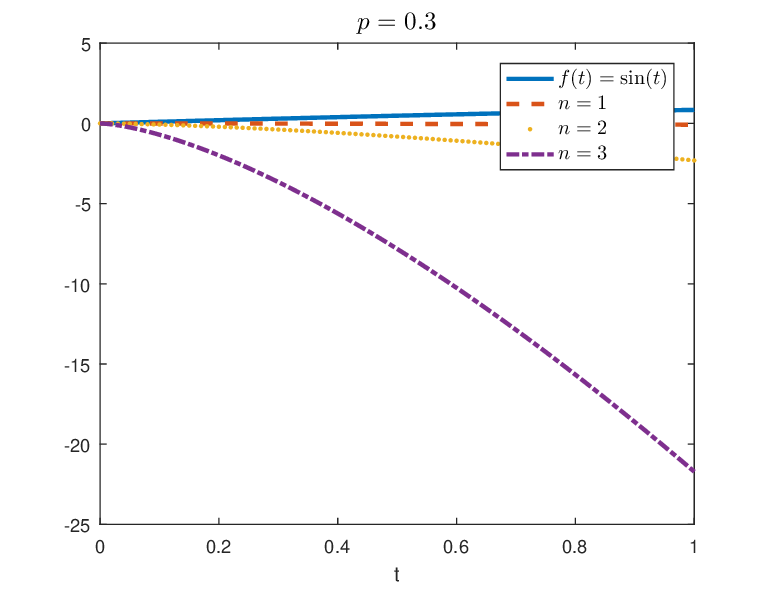}
\end{minipage}
\begin{minipage}{0.47\linewidth}
\centering
\includegraphics[scale=0.57]{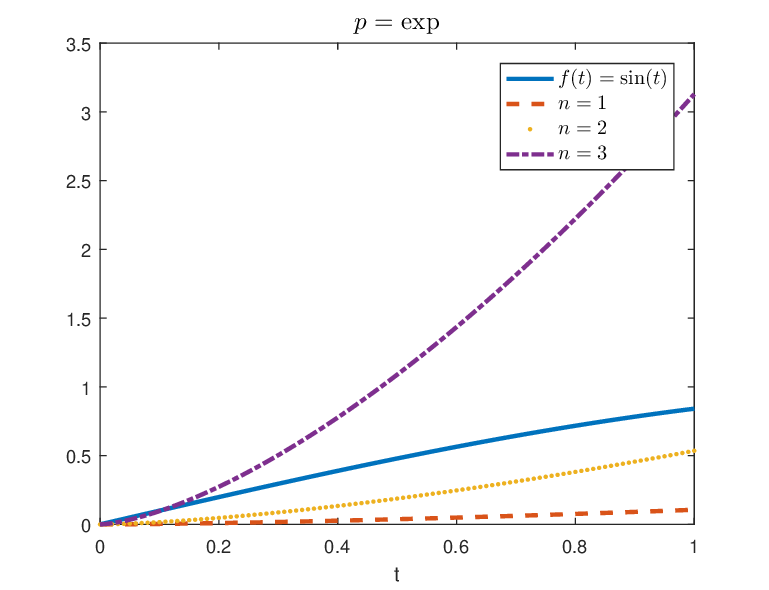}
\end{minipage}
\begin{minipage}{0.47\linewidth}
\centering
\includegraphics[scale=0.57]{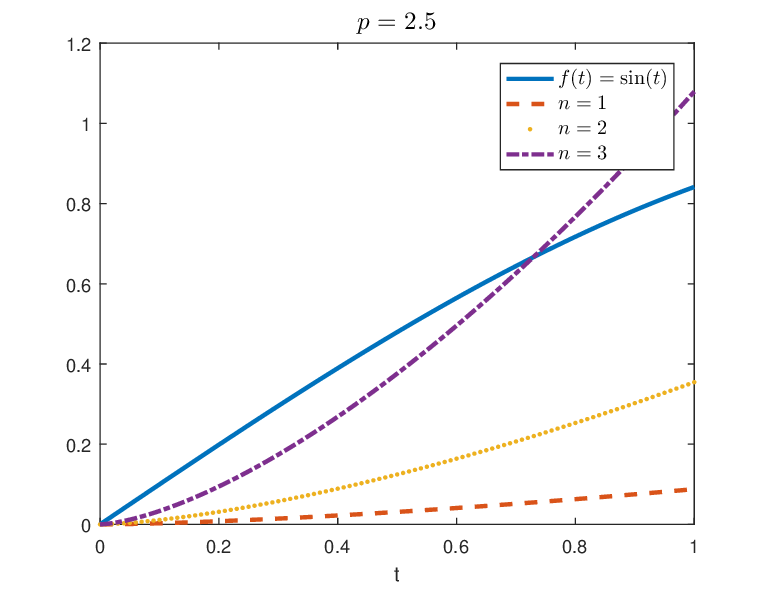}
\end{minipage}
\begin{minipage}{0.47\linewidth}
\centering
\includegraphics[scale=0.57]{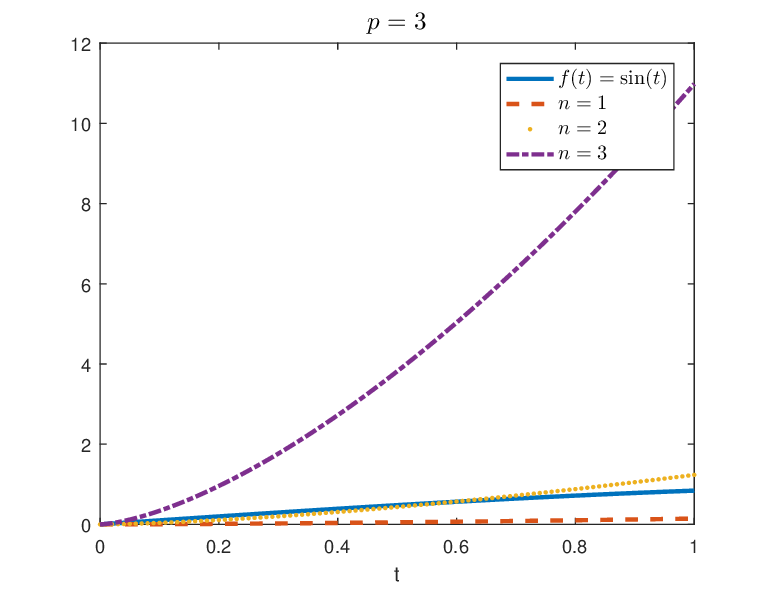}
\end{minipage}
\caption{The function $f(t)=\sin(t)$ and the corresponding Taylor polynomials 
${}^p\!A^{\alpha,\beta}_{n}(t)$ of order $n=1,2,3$, centered at $t = 0$, 
for $\alpha=0.1$, $\beta=1.5$ and different values of $p$.} 
\label{Figure1}
\end{center}
\end{figure}	
\end{example}


\section{Conclusion}
\label{Sec5}

We proved a new generalized Taylor's theorem for power fractional derivatives,  
which extends those available in the literature with nonsingular kernels. 
The proof is based on the establishment of new formulas for nth-order 
of the power fractional operators. We also obtained a general version 
of the mean value theorem. As an application, we used our main result 
to approximate functions at a given point, where the interpolation error 
of the approximation is also given explicitly. Examples and simulations are presented.

Our developed results can be used to solve fractional differential 
equations with non constant coefficients in series form. This will
be addressed elsewhere.


\section*{Statements and Declarations}

\subsection*{Competing and Conflict of Interests} 

The authors have no conflicts or competing of interests to declare.


\subsection*{Data Availability} 

No data associated to the manuscript.


\subsection*{Author contributions}

The conception and design of the study, 
the material preparation, the manuscript writing 
and analysis were performed by Hanaa Zitane 
and Delfim F. M. Torres. Both authors commented 
on all versions of the manuscript and
read and approved the final form of it.



\end{document}